\documentclass[12pt, twoside, reqno] {amsart}

\usepackage{amsfonts}
\usepackage{amssymb}
\usepackage{latexsym}
\usepackage{epsf,graphicx}
\usepackage{epsf,graphicx,latexsym,%
}

\newcommand{\be} {\begin{eqnarray}}
\newcommand{\ee} {\end{eqnarray}}
\newcommand{\bep} {\begin{eqnarray*}}
\newcommand{\eep} {\end{eqnarray*}}

\textwidth 135mm \textheight 200mm




\newcommand {\Hol}{\mathop{\rm Hol}\nolimits}

\newcommand {\Id}{\mathop{\rm Id}\nolimits}

\newcommand {\Lip}{\mathop{\rm Lip}\nolimits}

\newcommand {\dist}{\mathop{\rm dist}\nolimits}

\newcommand {\DD}{\mathcal{D}}
\newcommand {\Ff}{\mathcal{F}}


\newcommand{\R}{{\mathbb R}}
\newcommand{\N}{{\mathbb N}}

\newcommand{\C}{{\mathbb C}}



\newtheorem{remar}{Remark}[section]
\newtheorem{examp}{Example}[section]
\newtheorem{defin}{Definition}[section]
\newtheorem{corol}{Corollary}[section]
\newtheorem{propo}{Proposition}[section]
\newtheorem{theorem}{Theorem}[section]
\newtheorem{lemma}{Lemma}[section]
\newtheorem{remark}{Remark}[section]
\newtheorem{conj}{Conjecture}

\newcommand{\rema}{\begin{remar}\rm}
\newcommand{\erema}{$\blacktriangleright$\end{remar}}

\newcommand{\exa}{\begin{examp}\rm}
\newcommand{\eexa}{$\blacktriangleright$\end{examp}}

\def\lwvec(#1 #2){\linewd 0.1
           \lvec(#1 #2)
           \linewd 0.05}

\title{Differentiability of semigroups of Lipschitz or smooth mappings}

\author[M. Elin]{Mark Elin}

\address{Department of Mathematics,
         Ort Braude College,
         Karmiel 21982,
         Israel}

\email{mark$\_$elin@braude.ac.il}

\begin{document}

\begin{abstract}
In this note we study the differentiability with respect to the time-parameter of semigroups consisting of Lipschitzian or smooth self-mappings of a domain in a Banach space.

\end{abstract}

\maketitle

\section{Introduction}\label{sect-intro}
\setcounter{equation}{0}

In this paper we consider the following problem:

Let $\{F_t\}_{t\ge0}$ be a one-parameter semigroup consisting of continuous  self-mappings of a domain $\DD$  in a  Banach space $X$. Assume that this semigroup depends on the time-parameter $t$ continuously. The following question is fundamental:
\begin{center}
  {\it What conditions on the semigroup elements\\ entail its differentiability with respect to $t$?}
\end{center}
Indeed, solutions of autonomous dynamical systems (with identity initial data) known to be semigroups. In the reverse direction, if a semigroup is differentiable with respect to its parameter, it may satisfy an autonomous differential equation. At the same time, it is not clear whether a given semigroup is differentiable. Therefore the above question is relevant.

The following result is well-known and can be found in various books and textbooks (see, for example, \cite{D-Sch1958}).

\begin{theorem}\label{th_classic}
  Let $\{T_t\}_{t\ge0}$ be a strongly continuous semigroup of linear operators on $\DD=X$.

  \begin{itemize}
    \item [(a)] The mapping $t\mapsto T_tx$ is differentiable with respect to $t$ whenever $x$ belongs to a dense subspace of $X$.
    \item [(b)] It is differentiable for all $x\in X$ if and only if the semigroup  $\{T_t\}_{t\ge0}$ is uniformly continuous.
  \end{itemize}
\end{theorem}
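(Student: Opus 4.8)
The plan is to build everything on the infinitesimal generator
\[
Ax=\lim_{h\to0^+}\frac{T_hx-x}{h},
\]
whose domain $D(A)$ consists of those $x\in X$ for which this one-sided limit exists. My candidate for the dense subspace in part (a) is exactly $D(A)$. A preliminary observation I would record first is the standard local bound $\|T_t\|\le Me^{\omega t}$ with $M\ge1$: boundedness of $\{\|T_t\|:t\in[0,\delta]\}$ follows from strong continuity together with the Banach--Steinhaus theorem (here completeness of $X$ enters), and the growth estimate then comes from the semigroup law. This bound is needed to control $\|T_{t-h}\|$ in the left-derivative argument below.

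To prove that $D(A)$ is dense I would use mollified elements: for $x\in X$ and $r>0$ set $x_r=\frac1r\int_0^rT_sx\,ds$, a Riemann integral of a continuous $X$-valued function. A telescoping computation, after the substitution $u=s+h$, gives
\[
\frac{T_hx_r-x_r}{h}=\frac{1}{rh}\left(\int_r^{r+h}T_ux\,du-\int_0^hT_ux\,du\right)\xrightarrow[h\to0^+]{}\frac1r(T_rx-x),
\]
so $x_r\in D(A)$ with $Ax_r=\frac1r(T_rx-x)$; and $\|x_r-x\|\le\frac1r\int_0^r\|T_sx-x\|\,ds\to0$ by strong continuity. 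Hence $\overline{D(A)}=X$. For the actual differentiability on $D(A)$, the right derivative is immediate: $\frac{T_{t+h}x-T_tx}{h}=T_t\frac{T_hx-x}{h}\to T_tAx$, which simultaneously shows $T_tx\in D(A)$ and $AT_tx=T_tAx$. For the left derivative at $t>0$ I would split
\[
\frac{T_tx-T_{t-h}x}{h}-T_tAx=T_{t-h}\Bigl(\tfrac{T_hx-x}{h}-Ax\Bigr)+(T_{t-h}-T_t)Ax,
\]
where the first term vanishes thanks to the uniform local bound on $\|T_{t-h}\|$ and the second by strong continuity. Thus $t\mapsto T_tx$ is (two-sidedly) differentiable with derivative $T_tAx$, proving (a).

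For part (b), the direction $(\Leftarrow)$ is easy: uniform continuity is equivalent to $A$ being everywhere defined and bounded, in which case $T_t=e^{tA}$ is differentiable in operator norm, so $t\mapsto T_tx$ is differentiable for every $x$. The substantive direction is $(\Rightarrow)$. If $t\mapsto T_tx$ is differentiable at every $t\ge0$ for every $x$, then the right derivative at $t=0$ exists for every $x$, i.e.\ $D(A)=X$. The generator is always closed: from $T_hx-x=\int_0^hT_sAx\,ds$ for $x\in D(A)$ one passes to limits along $x_n\to x$, $Ax_n\to y$ to conclude $x\in D(A)$, $Ax=y$. An everywhere-defined closed operator on the Banach space $X$ is bounded by the closed graph theorem; then uniqueness for the linear equation $u'=Au$ forces $T_t=e^{tA}$, whence $\|T_t-\Id\|\le e^{t\|A\|}-1\to0$, so the semigroup is uniformly continuous.

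The main obstacle is this forward direction of (b): promoting pointwise (strong) differentiability to norm-continuity of the semigroup. This is precisely where the closed graph theorem, and hence the completeness of $X$, does the essential work, since bounded everywhere-definedness of the closed operator $A$ is what collapses strong differentiability into the uniform statement. The density step in (a), carried by the mollification argument, is the other technical heart of the proof; the remaining manipulations are routine applications of the semigroup law and strong continuity.
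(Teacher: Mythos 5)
The paper offers no proof of this theorem at all: it is quoted as classical background with a citation to Dunford--Schwartz, and your argument is precisely the standard proof found in that literature (density of $D(A)$ via the mollified averages $x_r=\frac1r\int_0^r T_sx\,ds$, the semigroup law and the local bound $\|T_t\|\le Me^{\omega t}$ for the two-sided derivative $T_tAx$, and closedness of $A$ plus the closed graph theorem for the forward direction of (b)), so your proposal is correct and takes essentially the same route the paper defers to. The only step you leave as a black box is the standard fact that a uniformly continuous semigroup has a bounded, everywhere-defined generator; this is filled in by observing that $\bigl\|\frac1r\int_0^r T_s\,ds-\Id\bigr\|<1$ for small $r$, so $\int_0^r T_s\,ds$ is invertible and $\frac1h(T_h-\Id)\to(T_r-\Id)\bigl(\int_0^r T_s\,ds\bigr)^{-1}$ in operator norm.
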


The first assertion of this theorem was generalized  to semigroups of nonlinear contractions defined on nonexpansive retracts of $X$ in \cite{Kom} and \cite{R-81} and to semigroups of nonlinear operators on closed convex sets in some special Banach spaces in \cite{R-80}; see also \cite{R-84}. It was established in these works that a semigroup is differentiable with respect to the time-parameter whenever its spatial variable belongs to a dense subset of the set on which the semigroup is defined. This conclusion fails in general, that is, there are nonlinear semigroups that have not densely defined derivative with respect to $t$ (see, for example, \cite{B}).

Concerning generalizations of the second assertion, that is, the differentiability of semigroups of nonlinear operators everywhere on their domains, little is known. Some sufficient condition including H\"older continuity with respect to $t$ can be derived from \cite{W}. To the best of our knowledge,  this problem has been completely solved only for holomorphic mappings. Namely, in a series of papers \cite{R-S96}--\cite{R-S98}, Reich and Shoikhet proved that

\begin{theorem}\label{th_RS}
  A semigroup  $\{F_t\}_{t\ge0}\subset\Hol(\DD)$ of holomorphic self-mappings of a bounded domain ${\DD\subset X}$ is differentiable with respect to the parameter $t$ if and only if it is locally uniformly continuous {\rm (see Definition~\ref{def-cont} below)}. 
\end{theorem}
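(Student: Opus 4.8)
\pr The statement is an equivalence, and I would treat the two implications separately; the forward one is routine while the converse is the substance.

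\emph{Differentiability $\Rightarrow$ local uniform continuity.} Suppose $t\mapsto F_t(x)$ is differentiable. Differentiating the semigroup law $F_{t+s}=F_t\circ F_s$ at $s=0$ produces a vector field $f(x):=-\frac{d}{dt}\big|_{t=0^+}F_t(x)$, and standard arguments (Vitali together with the Cauchy estimates used below) show that $f\in\Hol(\DD)$ is bounded on each set $D$ with $\dist(D,\partial\DD)>0$. Since $\frac{d}{dt}F_t(x)=-f(F_t(x))$, for $x$ in such a $D$ and $u$ in a bounded interval the orbit stays in a fixed set $D_1$ with $\dist(D_1,\partial\DD)>0$ on which $\|f\|\le M$, whence
\[
\|F_t(x)-F_s(x)\|=\Big\|\int_s^t f(F_u(x))\,du\Big\|\le M\,|t-s|,
\]
which is local uniform (indeed Lipschitz) continuity in $t$.

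\emph{Local uniform continuity $\Rightarrow$ differentiability.} I would first reduce to the existence of the generator at the origin. From $F_{t+h}-F_t=(F_h-\Id)\circ F_t$, differentiability of $t\mapsto F_t(x)$ at an arbitrary $t$ is equivalent to the existence of the right-hand derivative at $0$ for the orbit issuing from $F_t(x)$; together with the continuity of the semigroup this reduces everything to showing that the difference quotients $g_h:=(\Id-F_h)/h$ converge locally uniformly as $h\to0^+$ to a holomorphic vector field $f$, after which $\frac{d}{dt}F_t(x)=-f(F_t(x))$. Two features of holomorphic self-maps are decisive: by the Schwarz--Pick inequality every $F_t$ is nonexpansive for the hyperbolic (Kobayashi) metric $\rho_{\DD}$, and by the Cauchy integral estimates a bound on a holomorphic map in the supremum norm on a set controls its derivative on any smaller set.

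\emph{The main step and obstacle: a linear rate in $t$.} The heart of the matter is to upgrade local uniform continuity to the linear bound $\|x-F_h(x)\|\le C h$ on sets $D$ with $\dist(D,\partial\DD)>0$, i.e.\ local uniform boundedness of $\{g_h\}$. The natural device is subadditivity: fixing $x$ and writing $a(t)=\rho_{\DD}(x,F_t(x))$, the triangle inequality together with nonexpansiveness of $F_t$ gives
\[
a(t+s)=\rho_{\DD}(x,F_t(F_s(x)))\le\rho_{\DD}(x,F_t(x))+\rho_{\DD}(F_t(x),F_t(F_s(x)))\le a(t)+a(s),
\]
so that $\lim_{t\to0^+}a(t)/t$ exists in $[0,+\infty]$ by Fekete's lemma. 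Subadditivity and continuity alone, however, do \emph{not} force finiteness: a modulus such as $\omega(t)=\sqrt t$ is subadditive, so the \emph{holomorphy} of the $F_t$ must be used in an essential way to exclude sublinear behaviour and to obtain the finite linear rate locally uniformly in $x$ (via Cauchy estimates applied to the family $\{g_h\}$). This finiteness is the crux of the Reich--Shoikhet theorem and the step I expect to be the most delicate.

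\emph{Conclusion.} Once $\{g_h\}$ is locally uniformly bounded, the Cauchy estimates bound $\{Dg_h\}$ uniformly on interior sets, so the self-similar identity
\[
g_t=\tfrac12\,g_{t/2}+\tfrac12\,g_{t/2}\circ F_{t/2}
\]
yields, on each such $D$, a bound $\|g_t-g_{t/2}\|_D\le\tfrac12\sup\|Dg_{t/2}\|\cdot\|F_{t/2}-\Id\|_D\le C\,t$, which is summable along the dyadic sequence $t=2^{-n}$. Hence $\{g_{2^{-n}}\}$ is Cauchy in the topology of local uniform convergence and, by the Weierstrass theorem, converges to some $f\in\Hol(\DD)$; the linear rate lets one fill in between dyadic values and conclude that the full limit $f=\lim_{h\to0^+}g_h$ exists. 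The reduction above then gives differentiability of $t\mapsto F_t(x)$ for every $x$ and every $t\ge0$, completing the converse. \epr
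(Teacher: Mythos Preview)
Your proposal has a genuine gap at exactly the point you yourself flag as ``the crux'': you never actually establish the linear rate $\|x-F_h(x)\|\le Ch$ on sets strictly inside $\DD$, i.e.\ the local uniform boundedness of $\{g_h\}$. You observe correctly that subadditivity and continuity alone cannot give it, note that holomorphy must enter ``in an essential way,'' and then in your Conclusion simply assume the bound (``Once $\{g_h\}$ is locally uniformly bounded\ldots''). Everything downstream---the dyadic Cauchy argument---rests on this unproved step, so the converse implication is not established.

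The paper does supply this missing step, and by a route quite different from the hyperbolic-metric/Fekete framework you set up. The Kobayashi metric and subadditivity are never invoked. Cauchy estimates are used only to pass from $T$-continuity of $\{F_t\}$ to $T$-continuity of $\{F_t'\}$, which yields the $T$-Lipschitzian property $\Lip_{\hat\DD,\mu}(F_t-\Id)\to0$ (Lemma~\ref{lem_2}, Theorem~\ref{th-sg1}). The linear rate then comes from an elementary iteration estimate (Lemma~\ref{lem_1}/Corollary~\ref{cor1}): telescoping gives $\|f_{pt_0}(x)-f_{t_0}(x)\|\le\ell\|f_{t_0}(x)\|$ with $\ell<1$, and one chooses the integer $p$ so that $pt_0$ lands in a fixed interval $[\delta_1/2,\delta_1)$ on which $\|f_{pt_0}\|$ is already bounded by $2\mu/\delta_1$; rearranging yields $\|f_{t_0}\|\le L$ uniformly for all small $t_0$. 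The subsequent Cauchy-criterion argument is also organised differently (via $m=[1/s+1]$, $n=[1/t+1]$ and the common refinement $1/(mn)$ rather than dyadics), but that is a secondary distinction once the linear bound is in hand.

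Finally, note that the paper does not prove Theorem~\ref{th_RS} in full: it is quoted as a known result of Reich and Shoikhet, and only the sufficiency direction is recovered here, as Corollary~\ref{cor3}. There is thus nothing in the paper against which to compare your forward implication.
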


This generalizes the previous results by Berkson and Porta \cite{B-P} for the case where $\DD$ is the open unit disk in the complex plane $\C$ and by Abate \cite{A92} for the finite-dimensional case (see also \cite{R-S1, E-R-S-19} for more details).

The aim of this note is to find an analog of the Reich--Shoikhet theorem for semigroups of  not necessarily holomorphic  mappings. We introduce a subclass of Lipschitzian semigroups that are locally uniformly continuous and obtain a sufficient condition for differentiability of semigroups from this subclass in Section~\ref{sect_semigr}. Then we consider semigroups of smooth mappings in Section~\ref{sect_semigr1}.

\section{Main notions}\label{sect-defs}
\setcounter{equation}{0}

In this section we recall definitions of some notions used in the paper.  We start with some standard notations. Throughout the paper we denote by $X$ and $Y$ two (real or complex) Banach spaces
. Let $\DD\subset X$ and $\Omega\subset Y$ be domains (connected open sets). The set of all mappings that are continuous (respectively, smooth) on $\DD$ and take values in $\Omega$ is denoted by $C(\DD,\Omega)$ (respectively, $C^1(\DD,\Omega)$). If the Banach spaces $X$ and $Y$ are complex, a mapping $F:X\supset\DD\to\Omega\subset Y$ is said to be holomorphic if it is Fr\'{e}chet differentiable  at each point $x\in \DD$. By $\Hol(\DD,\Omega)$ we denote the set of all holomorphic mappings on $\DD$ with values in $\Omega$.

By $C(\DD)$ (respectively, $C^1(\DD)$ or $\Hol(\DD)$) we denote the set of all continuous (respectively, smooth or holomorphic) self-mappings of~$\DD$. Each one of these sets is  a semigroup with respect to composition operation. For $F\in C(\DD)$ we denote by $F^k$ the $k$-th iterate of $F$, that is, $F^1:= F$ and $F^{k+1}:=F\circ F^k,\ k\in\N$. The family of iterates $\{F^k\}_{k\in\N}$ forms a discrete-time semigroup on $\DD$. In this note we mostly concentrate on semigroups depending on continuous time (see Definition~\ref{def-sg} below).

Let a domain $\DD$ in a Banach space $ X$ be given. A bounded subset $\hat\DD \subset\DD$ is said to lie strictly inside $\DD$ if it is bounded away from the boundary $\partial\DD$ of $\DD$, that is, ${\inf\limits_{x\in\hat\DD} \dist(x,\partial \DD)>0}$. One of the surprising features of infinite-dimensional analysis is that a holomorphic mapping $f\in\Hol(\DD,Y)$ is not necessarily  bounded on  subsets lying strictly inside $\DD$ (see \cite{IJM-SLL-84, Har, R-S1}).

We will need some  different types of continuity for arbitrary families of mappings $\left\{f_t\right\}_{t\ge0}\subset C(\DD, Y)$ and relations between them.

\begin{defin}\label{def-cont}
The family $\left\{f_t\right\}_{t\ge0}\subset C(\DD,Y)$ is said to be
\begin{itemize}
\item jointly continuous (JC, for short) if for every $(t_0,x_0)\in[0,\infty)\times\DD$
\[
\lim_{t\to t_0,x\to x_0} f_t(x) =f_{t_0}(x_0);
\]

%
\item  locally uniformly continuous ($T$-continuous, for short) if for every $t_0\ge0$ and  for every subset $\hat\DD $  strictly inside~$\DD$,
\[
\sup_{x\in\hat\DD }\|f_t(x)-f_{t_0}(x) \| 
\to0\quad\mbox{as}\quad t\to t_0.
\]
\end{itemize}
\end{defin}
Notice that for the case where $X$ is finite-dimensional, 
local uniform continuity coincides with  uniform continuity on compact subsets. There are examples of JC families that are not 
T-continuous (see \cite{EJK-multi}).

We now define one-parameter continuous semigroups, which are the main object of interest  in this paper.

\begin{defin}\label{def-sg}
A jointly continuous family $\mathcal{F} =\left\{F_t\right\}_{t\ge0}\subset C(\DD)$ is called a one-parameter continuous semigroup (semigroup, for short) on $\DD$ if the following properties hold:

(i) $F_{t+s}=F_t \circ F_s$ for all $t,s\geq 0$;

(ii) $\lim\limits_{t\to 0^+}F_t(x) =x$ for all $x\in \mathcal{D}$.
\end{defin}
In particular, it can be shown that a semigroup is $T$-continuous if and only if $\left\| F_t(x) - x \right\|\to 0$  as $t\to0^+$ uniformly on every subset $\hat\DD$ strictly inside $\DD$.

Recall that the Lipschitz seminorm of $F\in C(\DD,Y)$ is defined by
\[
\Lip_\DD(F):=\sup\limits_{x,\tilde x\in\DD,x\not=\tilde x}\frac{\|F(x)-F(\tilde x)\|}{\|x-\tilde x\|}.
\]
The class of uniformly $k$-Lipschitzian semigroups, which is wider than the class of semigroups of nonexpansive mappings when $k>1$, was introduced in~\cite{G-K-T}. It consists of semigroups $\Ff =\left\{F_t\right\}_{t\ge0}\subset C(\DD)$ that satisfy $\Lip_\DD(F_t) \le k$ for all $t\ge0$. A Lipschitzian semigroup $\Ff$ is said to be uniformly continuous on $\DD$ if $\lim\limits_{t\to0^+} \Lip_\DD(F_t - \Id) = 0$ (see, for example,~\cite{P-X}), that is, if it is continuous with respect to the Lipschitz seminorm, uniformly  on~$\DD$. In what follows we will use a condition, which is weaker than Lipschitz continuity uniformly on subsets strictly inside $\DD$. To define it, for a subset  $\hat\DD $  strictly inside~$\DD$ with $\dist(\hat\DD, \partial\DD)> \mu>0$, we denote
\[
\Lip_{\hat\DD,\mu}(F):=\sup_{x\in\hat\DD,\|x-\tilde x\|\le\mu,x\not=\tilde x}   \frac{\|F(x)-F(\tilde x)  \|}{\|x-\tilde x\|}.
\]

Thus, the following notion is natural.
\begin{defin}\label{def-lip}
We say that a semigroup $\mathcal{F} =\left\{F_t\right\}_{t\ge0}\subset C(\DD)$ is locally uniformly Lipschitzian ($T$-Lipschitzian, for short) if for every subset $\hat\DD$  strictly inside~$\DD$ with $\dist(\hat\DD,\partial\DD)>\mu>0$,
\[
\Lip_{\hat\DD,\mu}(F_t-\Id)  \to0\quad\mbox{as}\quad t\to 0^+.
\]
\end{defin}
Note that if a Lipschitzian semigroup is uniformly continuous on subsets strictly inside $\DD$, then it is $T$-Lipschitzian. On the other hand, if a semigroup is $T$-Lipschitzian, then for every subset $\hat\DD$  strictly inside~$\DD$ and for every $\varepsilon>0$ there is $\delta>0$ such that the family $\left\{\left. F_t\right|_{\hat\DD}\right\}_{0\le t<\delta}$  is uniformly $(1+\varepsilon)$-Lipschitzian.

\section{$T$-Lipschitzian semigroups}\label{sect_semigr}
\setcounter{equation}{0}

In this section we establish  a sufficient condition for the differentiability of a $T$-Lipschitzian semigroup acting  on a domain $\DD\subset X$.

First let us mention that even in the one-dimensional case not all semigroups are differentiable with respect to $t$.
\begin{examp}
  Let $\DD=(-1,1)\subset\R$. Consider the family $\mathcal{F} =\left\{F_t\right\}_{t\ge0}\subset C(\DD)$ defined by
  \begin{eqnarray*}
    F_t(x) = \left\{  \begin{array}{ll}
                        e^{-t}x,                    & |x|>\frac12\,, 0\le t\le \ln(2|x|),\vspace{2mm} \\
                        2e^{-2t}x|x|,         & |x|>\frac12\,,  t> \ln(2|x|), \vspace{2mm}\\
                        e^{-2t}x,                  & |x|\le\frac12\,.
                      \end{array}                      \right.
  \end{eqnarray*}
  It can be easily verified that this family forms a semigroup. For every $x$ with $|x|>\frac12\,,$ this semigroup is not differentiable with respect to $t$ at $t_0= \ln(2|x|)$. Furthermore, this semigroup is not $T$-Lipschitzian.
\end{examp}

To prove our main results, we need the following auxiliary assertion that together with Lemma~\ref{lem_2} below gives us a non-holomorphic analog of Lemma~6.1 in \cite{R-S1}.

\begin{lemma}\label{lem_1}
Let $\DD\subset X$ be a domain and let $\hat \DD$ lie strictly inside $\DD$ and satisfy $ \dist(\hat \DD,\partial\DD)>\mu$  for some $\mu>0$. Denote $\DD_\mu:=\left\{ \tilde x\in\DD :\  \dist(\tilde x, \hat\DD)<\mu\right\}$. Let $p\in\N$. Let $\phi\in C(\DD)$ satisfy
 \begin{equation}\label{estim}
  \Lip_{\hat\DD,\mu}(\phi^k-\Id) \le\ell
  \end{equation}
 for all $k=1,\ldots,p,$ and
\begin{equation}\label{mu}
   \sup_{x\in\hat \DD}\left\| x-\phi(x) \right\| \le\mu.
  \end{equation}
Then for all $x\in\hat\DD$ we have
  \[
  \left\| x-\phi^p(x) -p(x-\phi(x))  \right\|  \le (p-1)\ell \|x-\phi(x)\|.
  \]
\end{lemma}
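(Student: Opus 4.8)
The plan is to exploit a telescoping decomposition together with the commutativity of the iterates of a single map. First I would write the $p$-step displacement as a telescoping sum; setting $\phi^0=\Id$,
\[
x-\phi^p(x)=\sum_{k=0}^{p-1}\bigl(\phi^k(x)-\phi^{k+1}(x)\bigr).
\]
Since $p(x-\phi(x))=\sum_{k=0}^{p-1}(x-\phi(x))$, subtracting termwise gives
\[
x-\phi^p(x)-p(x-\phi(x))=\sum_{k=0}^{p-1}\Bigl[\bigl(\phi^k(x)-\phi^{k+1}(x)\bigr)-\bigl(x-\phi(x)\bigr)\Bigr].
\]
The $k=0$ summand vanishes identically, so only the $p-1$ terms with $k\ge1$ survive.

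The crucial step is to recast each surviving summand into the form measured by $\Lip_{\hat\DD,\mu}$. Using $\phi^{k+1}=\phi^k\circ\phi$ (iterates of one map commute), I have $\phi^{k+1}(x)=\phi^k(\phi(x))$, whence
\[
\bigl(\phi^k(x)-\phi^{k+1}(x)\bigr)-\bigl(x-\phi(x)\bigr)=\bigl(\phi^k-\Id\bigr)(x)-\bigl(\phi^k-\Id\bigr)(\phi(x)).
\]
Now the two evaluation points are $x$ and $\phi(x)$: the base point $x$ lies in $\hat\DD$, while $\|x-\phi(x)\|\le\mu$ by \eqref{mu}. This is precisely the configuration over which the seminorm $\Lip_{\hat\DD,\mu}$ is taken, so \eqref{estim} yields
\[
\bigl\|(\phi^k-\Id)(x)-(\phi^k-\Id)(\phi(x))\bigr\|\le\Lip_{\hat\DD,\mu}(\phi^k-\Id)\,\|x-\phi(x)\|\le\ell\,\|x-\phi(x)\|
\]
for each $k=1,\ldots,p-1$ (the case $x=\phi(x)$ being trivial). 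Summing these $p-1$ bounds and applying the triangle inequality to the displayed decomposition delivers the asserted estimate $(p-1)\ell\,\|x-\phi(x)\|$.

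The argument is essentially forced once the decomposition is discovered; the one genuine subtlety — and the point I would watch most carefully — is legitimizing the passage to $\Lip_{\hat\DD,\mu}$. This is where the two hypotheses cooperate: rewriting via $\phi^{k+1}=\phi^k\circ\phi$ ensures that $x$ itself (rather than some iterate $\phi^k(x)$, which need not lie in $\hat\DD$) plays the role of the base point, while \eqref{mu} guarantees the companion point $\phi(x)$ stays within distance $\mu$ of it. I would remark that all points $\phi(x)$ occurring here belong to $\DD_\mu$, so every evaluation is well defined inside $\DD$; the set $\DD_\mu$ assumes its more substantive role in the companion Lemma~\ref{lem_2}.
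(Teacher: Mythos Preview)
Your proposal is correct and follows essentially the same route as the paper: the identical telescoping decomposition, the same rewriting $\bigl(\phi^k(x)-\phi^{k+1}(x)\bigr)-(x-\phi(x))=(\phi^k-\Id)(x)-(\phi^k-\Id)(\phi(x))$, and the same application of \eqref{estim} with base point $x\in\hat\DD$ and companion $\phi(x)$ admissible by \eqref{mu}. Your added commentary on why $x$ rather than $\phi^k(x)$ must serve as the base point is a useful clarification, but the mathematical content matches the paper's proof line for line.
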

\begin{proof}
  First we note that \eqref{estim} means that for all $x\in\hat\DD$ and $x_*\in\DD $ such that $\|x-x_*\| \le\mu$ we have
    \begin{eqnarray*}
     && \left\| (x-\phi^k(x))  -(x_* - \phi^k(x_*) )   \right\|   \le \ell  \|x-x_*\|.
  \end{eqnarray*}
Since $\phi$ satisfies \eqref{mu}, we can take here $x_*=\phi(x)$   and then, using the triangle inequality, we get
  \begin{eqnarray*}
     \left\| x-\phi^p(x) -p(x-\phi(x))  \right\|  &=&  \left\|\sum_{k=0}^{p-1} \left[ \phi^k(x)-\phi^{k+1}(x) -(x-\phi(x)) \right]  \right\|   \\
     &\le &  \sum_{k=1}^{p-1} \left\| \left( \phi^k(x)-x\right) -\left(\phi^{k+1}(x) -\phi(x)\right)  \right\|  \\
     &\le&  \sum_{k=1}^{p-1}\ell\|x-\phi(x)\| ,
  \end{eqnarray*}
which completes the proof.
\end{proof}

Let now a semigroup $\Ff=\left\{F_t\right\}_{t\ge0}$ be given. With the goal of this paper  in mind, it is natural to denote
\begin{equation}\label{f_t}
  f_t(x):=\frac1t\left(F_t(x)-x\right)
\end{equation}
and to apply the lemma above to the mapping $\phi$ defined by $$\phi(x):= F_{t_0}(x) = t_0f_{t_0}(x)+x $$ for a fixed $t_0$.

\begin{corol}\label{cor1}
    Let  $\hat \DD$ lie strictly inside $\DD$ such that \ $ \dist(\hat \DD,\partial\DD)>\mu>0$. Denote $\DD_\mu:=\left\{ \tilde x:\  \dist(\tilde x, \hat\DD)<\mu\right\}\subset\DD$. Let $\Ff=\left\{F_t\right\}_{t\ge0}\subset C(\DD)$ and $f_t$ be defined by \eqref{f_t}. Let $p\in\N$ and $t_0>0$.  If
    \begin{equation}\label{estim1}
\Lip_{\hat\DD,\mu}(t_0kf_{t_0k})    \le\ell .
  \end{equation}
  for all $k=1,\ldots,p,$ and $\sup\limits_{x\in\hat \DD} \left\| f_{t_0}(x) \right\| \le\frac\mu{t_0}$,
then for all $x\in\hat\DD$ we have
   \[
  \left\| f_{pt_0}(x) -f_{t_0}(x)  \right\|  \le \frac{p-1}p \ell \|f_{t_0}(x)\|\le \ell \|f_{t_0}(x)\|.
  \]
\end{corol}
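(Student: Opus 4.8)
The plan is to deduce this directly from Lemma~\ref{lem_1} via the substitution suggested just before the statement, namely by taking $\phi := F_{t_0}$. The first step is to record the two elementary identities furnished by the semigroup property. Since $\Ff=\{F_t\}$ satisfies $F_{t+s}=F_t\circ F_s$, the iterates of $\phi=F_{t_0}$ are $\phi^k=F_{t_0}^k=F_{kt_0}$ for every $k$. Consequently, from the definition \eqref{f_t} one has $\phi^k(x)-x=F_{kt_0}(x)-x=t_0 k\, f_{t_0 k}(x)$, so that the mapping $\phi^k-\Id$ coincides with the object $t_0 k f_{t_0 k}$ appearing in \eqref{estim1}; in particular $\phi-\Id=t_0 f_{t_0}$.

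Next I would check that the two hypotheses of Lemma~\ref{lem_1} hold for this choice of $\phi$. By the identity just noted, the bound \eqref{estim} becomes exactly $\Lip_{\hat\DD,\mu}(t_0 k f_{t_0 k})\le\ell$ for $k=1,\ldots,p$, which is the assumption \eqref{estim1}. For the displacement bound \eqref{mu}, I would use $\|x-\phi(x)\|=t_0\|f_{t_0}(x)\|$ together with the hypothesis $\sup_{x\in\hat\DD}\|f_{t_0}(x)\|\le\mu/t_0$ to conclude $\sup_{x\in\hat\DD}\|x-\phi(x)\|\le\mu$. One should also note that $\phi=F_{t_0}\in C(\DD)$ is genuinely a self-mapping of $\DD$, so that Lemma~\ref{lem_1} applies with $p$ and $\ell$ as given.

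The final step is to translate the conclusion of Lemma~\ref{lem_1} back into the language of $f_t$. Substituting $x-\phi^p(x)=-p t_0 f_{pt_0}(x)$ and $x-\phi(x)=-t_0 f_{t_0}(x)$ into the estimate $\|x-\phi^p(x)-p(x-\phi(x))\|\le(p-1)\ell\|x-\phi(x)\|$ gives $p t_0\|f_{pt_0}(x)-f_{t_0}(x)\|\le(p-1)\ell t_0\|f_{t_0}(x)\|$. Dividing by $p t_0>0$ yields $\|f_{pt_0}(x)-f_{t_0}(x)\|\le\frac{p-1}{p}\ell\|f_{t_0}(x)\|$, and since $\frac{p-1}{p}\le1$ the second inequality in the statement follows at once.

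Because the argument is a direct substitution, I do not expect any serious obstacle. The only point requiring care is the bookkeeping of the factors $t_0$ and $k$ when passing between $\phi^k-\Id$ and $f_{t_0 k}$, and the verification that the normalization $\mu/t_0$ in the hypothesis on $\sup_{\hat\DD}\|f_{t_0}\|$ is precisely what is needed to recover condition \eqref{mu}.
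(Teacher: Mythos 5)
Your proof is correct and is exactly the argument the paper intends: the corollary is obtained from Lemma~\ref{lem_1} by the substitution $\phi=F_{t_0}$, using the semigroup property to identify $\phi^k-\Id$ with $t_0k\,f_{t_0k}$ and then unwinding the conclusion. Your bookkeeping of the factors $t_0$, $k$, and $p$ matches the paper's implicit computation precisely.
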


We are now ready to prove our main result.

\begin{theorem}\label{th-sg}
      Let $\Ff=\left\{F_t\right\}_{t\ge0}\subset C(\DD)$ be a $T$-continuous and $T$-Lipschitzian semigroup. Then the strong limit
 \[
f(x)=\lim_{t\to0^+} \frac1t\left(F_t(x)-x\right)
\]
    exists uniformly on subsets strictly inside $\DD$. Moreover, $f \in C(\DD, X)$ and is bounded on every set that lies strictly inside $\DD$. In turn, the mapping $u$ defined by $u(t,x)=F_t(x) ,\ (t,x)\in[0,\infty)\times\DD,$ solves the Cauchy problem
\begin{equation}\label{cauchy}
\left\{
\begin{array}{l}
\displaystyle \frac{d u(t,x)}{d t} =f(u(t,x)) \vspace{2mm} \\
u(0,x)=x\in\DD .
\end{array}%
\right.
\end{equation}
\end{theorem}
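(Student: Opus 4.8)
The plan is to use Corollary~\ref{cor1} as the engine of the proof. It asserts that along integer multiples the normalized increments $f_{pt_0}=\tfrac1{pt_0}(F_{pt_0}-\Id)$ and $f_{t_0}$ satisfy $\|f_{pt_0}(x)-f_{t_0}(x)\|\le\ell\,\|f_{t_0}(x)\|$, and the $T$-Lipschitzian hypothesis (Definition~\ref{def-lip}) lets us take $\ell$ arbitrarily small once all the times $kt_0$, $k\le p$, are small. The catch is that every estimate the corollary produces is proportional to $\|f_{t_0}\|$, so it is useless until we know that $\{f_t\}$ is bounded, uniformly in $t$, on sets strictly inside $\DD$. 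Establishing this a priori bound is the step I expect to be the main obstacle, precisely because in an infinite-dimensional $X$ a continuous self-map need not even be bounded on a bounded strictly-interior set.

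I would fix $\hat\DD$ strictly inside $\DD$ with $\dist(\hat\DD,\partial\DD)>\mu>0$ and set $g(s):=\sup_{x\in\hat\DD}\|F_s(x)-x\|$. I would first bound $g(s)$ for each fixed $s>0$ without referring to the corollary: $T$-continuity (Definition~\ref{def-cont}) makes $t\mapsto F_t$ continuous, uniformly on $\hat\DD$, at every $t_0\in[0,s]$, so by compactness of $[0,s]$ one can choose $0=s_0<\dots<s_N=s$ with $\sup_{\hat\DD}\|F_{s_{i+1}}-F_{s_i}\|<1$, and telescoping $F_s(x)-x=\sum_i(F_{s_{i+1}}(x)-F_{s_i}(x))$ gives $g(s)\le N<\infty$; the same estimate shows $g$ is continuous, hence bounded on compact subintervals of $(0,\infty)$. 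With this in hand I would fix $\varepsilon\in(0,1)$ and, using Definitions~\ref{def-cont} and~\ref{def-lip}, choose $\delta>0$ so small that $g(t)\le\mu$ and $\Lip_{\hat\DD,\mu}(F_t-\Id)<\varepsilon$ for $0<t<\delta$. For $t\in(0,\delta)$ I would pick $p\in\N$ with $pt\in[\delta/2,\delta)$; then Corollary~\ref{cor1} applies with $t_0=t$ and $\ell=\varepsilon$, and rearranging its conclusion gives $\|f_t(x)\|\le(1-\varepsilon)^{-1}\|f_{pt}(x)\|\le(1-\varepsilon)^{-1}g(pt)/(pt)$. As $pt$ stays in the fixed compact interval $[\delta/2,\delta]$, this produces one constant $M$ with $\|f_t(x)\|\le M$ for all $x\in\hat\DD$ and $0<t<\delta$.

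The existence of the limit I would then get as a uniform Cauchy estimate. For commensurable $a,b\in(0,\sigma]$ with $\sigma\le\delta$, write $a=m\tau$ and $b=n\tau$ with a common (small) divisor $\tau$ and apply Corollary~\ref{cor1} twice, with $t_0=\tau$ and $p=m$, respectively $p=n$. Since the largest time involved is $a$ (resp.\ $b$), hence $\le\sigma$, the relevant $\ell$ may be taken to be $\eta(\sigma):=\sup_{0<t\le\sigma}\Lip_{\hat\DD,\mu}(F_t-\Id)\to0$ as $\sigma\to0^+$; routing through $f_\tau$ and using $\|f_\tau\|\le M$ yields $\|f_a(x)-f_b(x)\|\le 2\eta(\sigma)M$ on $\hat\DD$. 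Because $t\mapsto f_t(x)$ is continuous on $(0,\infty)$ for each fixed $x$ and commensurable pairs are dense, this bound extends to all $a,b\in(0,\sigma]$, so letting $\sigma\to0^+$ shows $\{f_t\}$ is uniformly Cauchy on $\hat\DD$. Hence $f(x)=\lim_{t\to0^+}f_t(x)$ exists uniformly on every set strictly inside $\DD$; as a locally uniform limit of the continuous maps $f_t$ it lies in $C(\DD,X)$, and the bound $\|f_t\|\le M$ passes to $f$, giving boundedness on strictly-interior sets.

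It remains to identify $f$ as the infinitesimal generator. The semigroup law gives $\tfrac1h(F_{t+h}(x)-F_t(x))=f_h(F_t(x))$ for $h>0$, and letting $h\to0^+$ yields the right derivative $f(F_t(x))=f(u(t,x))$ by pointwise convergence $f_h\to f$. For the left derivative I would write $\tfrac1h(F_t(x)-F_{t-h}(x))=f_h(F_{t-h}(x))$ and combine the local uniform convergence $f_h\to f$ near $F_t(x)$ with the continuity of $f$ and of $t\mapsto F_t(x)$ to obtain the same value $f(u(t,x))$. Together with $u(0,x)=x$, this shows that $u(t,x)=F_t(x)$ solves the Cauchy problem~\eqref{cauchy}.
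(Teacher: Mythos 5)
Your proposal is correct, and its engine is the same as the paper's: you derive the a priori bound $\sup_{x\in\hat\DD}\|f_t(x)\|\le M$ by applying Corollary~\ref{cor1} with $p$ chosen so that $pt$ lands in a fixed window $[\delta/2,\delta)$, which is exactly the mechanism behind the paper's bound \eqref{L} (your preliminary compactness/telescoping argument for finiteness and continuity of $g$ is harmless but superfluous, since once $g\le\mu$ on $(0,\delta)$ you already have $\|f_{pt}(x)\|\le\mu/(pt)\le 2\mu/\delta$). Where you genuinely diverge is the Cauchy criterion. The paper takes reference times $1/m$, $1/n$ with $m=\left[\frac1s+1\right]$, $n=\left[\frac1t+1\right]$, compares both to $f_{1/(mn)}$ via Corollary~\ref{cor1}, and then must control the mismatched terms $f_s-f_{1/m}$ and $f_t-f_{1/n}$; it does so through the semigroup identity $f_s-f_{1/m}=\frac{sm-1}{sm}\left[f_{s-1/m}\circ F_{1/m}-f_{1/m}\right]$ combined with $0<\frac{sm-1}{sm}<s$. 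You instead prove the estimate only for commensurable pairs $a=m\tau$, $b=n\tau$, where Corollary~\ref{cor1} applies twice through the common divisor $\tau$ with no remainder terms, and then extend to arbitrary pairs by density of commensurable pairs together with the continuity of $t\mapsto f_t(x)$, which indeed follows from the joint continuity built into Definition~\ref{def-sg}. Both routes are sound: the paper's is purely quantitative and stays inside the uniform estimates, while yours trades the algebraic manipulation for a soft limiting argument and is arguably cleaner. Finally, your treatment of the Cauchy problem \eqref{cauchy} is more complete than the paper's: the paper exhibits only the right derivative $\lim_{s\to0^+}\frac1s\left(F_{t+s}(x)-F_t(x)\right)=f(F_t(x))$, whereas you also compute the left derivative at $t>0$, using the locally uniform convergence $f_h\to f$ near $F_t(x)$ together with continuity of $f$ and of $t\mapsto F_t(x)$; this fills in a step that the paper glosses over.
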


\begin{proof}
Let $\hat\DD$  be an arbitrary subset lying strictly inside $\DD$.  Take a positive number $\mu<\min\left(1, \dist(\hat\DD,\partial\DD)\right)$. Then the set $\DD_\mu:=\left\{ \tilde x:\  \dist(\tilde x, \hat\DD)< \mu\right\}$ lies strictly inside~$\DD$.   Since  the semigroup $\Ff$ is  $T$-continuous and $T$-Lipschitzian,  there is a positive $\delta_1$ such that
\begin{equation}\label{f_t-assum}
 \sup\limits_{x\in \DD_\mu} \left\|F_t(x)-x \right\| < \mu\quad\mbox{and}\quad \Lip_{\hat\DD,\mu}(F_t-\Id)  < \mu
 \end{equation}
 whenever $t\in[0,\delta_1 )$. These inequalities can be rewritten as
  \begin{equation}\label{f_t-assum1}
     \sup\limits_{x\in \DD_\mu} \left\|f_t(x) \right\| <\frac\mu t\qquad\mbox{and}\qquad \Lip_{\hat\DD,\mu}(f_t)   <\frac \mu t.
  \end{equation}
For every $t_0\in\left(0,\frac{\delta_1}2\right)$, the interval $\left(\frac{\delta_1}{2t_0}, \frac{\delta_1} {t_0} \right)$ contains at least one natural number $p\ge2$. Since $\hat\DD\subset\DD_\mu$, inequalities \eqref{f_t-assum1} and the choice of $p$ mean that the assumptions of Corollary~\ref{cor1} are satisfied with $\ell=\mu$. Therefore,
 \[
    \left\| f_{t_0}(x)  \right\|  -   \left\| f_{pt_0}(x)   \right\| \le     \left\| f_{pt_0}(x) -f_{t_0}(x)  \right\|  \le \mu\|f_{t_0}(x)\|,\qquad x\in\hat\DD,
  \]
hence  $ (1-\mu)   \left\| f_{t_0}(x)  \right\| \le  \left\| f_{pt_0}(x)   \right\|  <\frac{\mu}{pt_0}  <\frac{2\mu}{\delta_1} \, $ by   \eqref{f_t-assum1}. Thus \begin{equation}\label{L}
  \left\| f_{t}(x)  \right\|< \frac{2\mu}{(1-\mu)\delta_1}=:L\quad \mbox{for all}\quad x\in \hat\DD\quad \mbox{and}\quad t\in\left(0,\frac{\delta_1}2\right).
\end{equation}

As above, since  $\Ff$ is  $T$-continuous and $T$-Lipschitzian, we conclude that for any positive $\varepsilon<\mu $ there is a positive number $\delta<\min\left\{\frac{\delta_1}2\,, \varepsilon \right\}$ such that inequalities \eqref{f_t-assum1} with $\mu$ replaced by $\varepsilon$ hold whenever $t<\delta$. Using this remark, we now show that the family  $\left\{f_t\right\}_{t>0} $ satisfies the Cauchy criterion as $t\to0^+$.

For $s,t>0$ denote $m=\left[\frac1s +1\right],\ n=\left[\frac1t +1\right]$. Then the triangle inequality gives us
\begin{eqnarray*}
  \left\|f_s(x)-f_t(x)  \right\|  &\le&  \left\|f_{\frac1{n}}(x)- f_{\frac1{m}}(x)  \right\| + \left\|f_s(x)- f_{\frac1{m}}(x)  \right\| \\
  &+& \left\|f_t(x)- f_{\frac1{n}}(x)  \right\|   = A_1+A_2+A_3,
\end{eqnarray*}
where the notations $A_1,A_2,A_3$ are evident.

If $s,t<\delta$, then   $m,n>\frac1\delta$\,. So,  we can choose $t_0=\frac1{mn},\ \mu=\ell=\varepsilon$ in Corollary~\ref{cor1} and apply it with  $p=m$ and $p=n$. This and \eqref{L} lead us to the   estimate:
\begin{eqnarray*}
 A_1  &\le&  \left\|f_{\frac1n}(x)- f_{\frac1{mn}}(x)  \right\| +  \left\|f_{\frac1{mn}}(x)- f_{\frac1m}(x)  \right\|\\
   &\le& \frac{m-1}{m}\,\varepsilon  \left\|f_{\frac1{mn}}(x) \right\| + \frac{n-1}{n}\,\varepsilon  \left\|f_{\frac1{mn}}(x) \right\| \\
   &\le& 2 \varepsilon  \left\|f_{\frac1{mn}}(x) \right\| \le 2\varepsilon L\qquad\mbox{for all}\qquad x\in\hat\DD.
\end{eqnarray*}

To estimate $A_2$, we rewrite the difference within the norm in the form
\begin{eqnarray*}
 && f_s(x)- f_\frac1m(x)    =  \frac1s \left( F_s(x)-x\right) - m\left(F_{\frac1m}(x)-x\right)  \\
     &=&  \frac1s \left( F_s(x)-F_{\frac1m}(x)\right) - \left(m-\frac1s\right)\left(F_{\frac1m}(x)-x\right)   \\
     &=& \frac{sm-1}{sm}\left[ \frac{1}{s-\frac1m} \left( F_{s-\frac1m}(F_{\frac1m}(x))-F_{\frac1m}(x)\right) - m\left(F_{\frac1m}(x)-x\right) \right] \\
     &=& \frac{sm-1}{sm}\left[ f_{s-\frac1m} \left(F_{\frac1m}(x)\right) - f_{\frac1m}(x)\right].
\end{eqnarray*}
Therefore
\[
  A_2    \le
  \frac{sm-1}{sm}\left( \left\| f_{s-\frac1m} \left(F_{\frac1m}(x)\right)\right\| +\left\| f_{\frac1m}(x) \right\| \right)   .
\]

Further, $\frac1m<\delta<\delta_1$, hence $\left\|F_\frac1m (x)-x\right\|<\mu$ for all $x\in\DD_\mu$ by \eqref{f_t-assum}. In particular, $F_\frac1m (x)\in\DD_\mu$ for all $x\in\hat\DD$. Therefore, \eqref{L} can be applied at both points $x$ and $F_{\frac1m}(x)$. So, we get
\[
\left\| f_{s-\frac1m} \left(F_{\frac1m}(x)\right)\right\| +\left\| f_{\frac1m}(x)\right\|\le 2L\quad \mbox{for all}\quad x\in\hat\DD.
\]
 Since $0< \frac{sm-1}{sm}< s$, we conclude that $A_2$  is less than~$s\cdot2L<2\delta L<2\varepsilon L$ for all $x\in\hat\DD$.

Similarly, $A_3<2\varepsilon L$ for all $x\in\hat\DD$. Summarizing, we conclude that for any $\varepsilon>0$ there is a number $\delta>0$ such that $\sup\limits_{x\in\hat\DD} \left\|f_s(x)-f_t(x)  \right\|  <6\varepsilon L$ for all $s$ and $t$ less than $\delta$.

Thus $\left\{f_t\right\}_{t>0} \subset C(\hat\DD,X)$ is a Cauchy net, so it converges uniformly on~$\hat\DD$ as $t\to0^+$. Hence its limit $f$ is continuous on $\hat\DD$. Moreover, by \eqref{L}  this mapping  is bounded on $\hat\DD$. Since the subdomain $\hat\DD$ strictly inside $\DD$ was chosen arbitrarily, we conclude that $f\in C(\DD,X)$.

In fact, we have shown that for every $x\in\DD$, $F_t(x)$ is right-differentiable  at $t=0$. For $t>0$ this conclusion follows by the semigroup property:
\[
\lim_{s\to0^+} \frac1s\left(F_{t+s}(x)-F_t(x) \right) = \lim_{s\to0^+} \frac1s\left(F_{s}(F_t(x))-F_t(x) \right) =f(F_t(x)).
\]
 The proof is complete.
\end{proof}

\begin{remark}
  It can be easily seen that if the mapping $f$  defined by the strong limit $f(x):=\lim\limits_{t\to0^+} \frac1t(F_t(x)-x)$ is bounded on subsets that lie strictly inside $\DD$, then $\Ff$ is $T$-continuous.
  \end{remark}

\section{Semigroups of smooth mappings}\label{sect_semigr1}
\setcounter{equation}{0}

We now concentrate on semigroups consisting of smooth self-mappings of a domain $\DD\subset X$. We start with the following simple assertion (cf. Lemma~\ref{lem_1}).
\begin{lemma}\label{lem_2}
Let $\DD\subset X$ be a domain and let $\hat \DD$ lie strictly inside $\DD$ and satisfy $ \dist(\hat \DD,\partial\DD)>\mu$  for some $\mu>0$. Denote $\DD_\mu:=\left\{ \tilde x\in\DD :\  \dist(\tilde x, \hat\DD)<\mu\right\}$.    If $\phi\in C^1(\DD)$ satisfies
  \begin{equation}\label{ell}
 \sup_{x\in \DD_\mu}  \left\|\Id_X - \phi'(x)\right\| \le \ell ,
  \end{equation}
  then $ \Lip_{\hat\DD,\mu}(\phi-\Id) \le\ell$.
 \end{lemma}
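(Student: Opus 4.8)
The plan is to write $G:=\phi-\Id$ and to deduce the asserted bound on $\Lip_{\hat\DD,\mu}(\phi-\Id)$ from the mean value inequality for the Fr\'echet-differentiable map $G$. By the definition of $\Lip_{\hat\DD,\mu}$, it suffices to show that $\|G(x)-G(\tilde x)\|\le\ell\|x-\tilde x\|$ for every $x\in\hat\DD$ and every $\tilde x\ne x$ with $\|x-\tilde x\|\le\mu$. Since $G'(y)=\phi'(y)-\Id_X$, hypothesis \eqref{ell} reads exactly $\|G'(y)\|=\|\Id_X-\phi'(y)\|\le\ell$ for all $y\in\DD_\mu$.

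The geometric point I would verify next is that the segment joining $x$ and $\tilde x$ stays inside the region where this derivative bound is available. Parametrize it by $y_s=x+s(\tilde x-x)$, $s\in[0,1]$. Then $\|y_s-x\|=s\|\tilde x-x\|\le s\mu$, so $\dist(y_s,\hat\DD)\le\|y_s-x\|<\mu$ for every $s\in(0,1)$; hence the whole open segment lies in $\DD_\mu$. Moreover, since $\dist(x,\partial\DD)>\mu$ and $\|\tilde x-x\|\le\mu$, the endpoint $\tilde x$ satisfies $\dist(\tilde x,\partial\DD)>0$, so the closed segment $[x,\tilde x]$ is contained in $\DD$, where $\phi$, and therefore $G$, is continuously differentiable.

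Finally I would apply the mean value inequality to $g(s):=G(y_s)$, which is continuous on $[0,1]$ and differentiable on $(0,1)$ with $g'(s)=G'(y_s)(\tilde x-x)$. This yields $\|G(x)-G(\tilde x)\|=\|g(1)-g(0)\|\le\|\tilde x-x\|\sup_{s\in(0,1)}\|G'(y_s)\|\le\ell\|x-\tilde x\|$, using the derivative bound along the open segment. Dividing by $\|x-\tilde x\|$ and taking the supremum over all admissible pairs gives $\Lip_{\hat\DD,\mu}(\phi-\Id)\le\ell$. The only subtlety worth attention — and the step I would phrase carefully — is that $\DD_\mu$ is defined by a strict inequality, so the endpoint $\tilde x$ (where $\dist(\tilde x,\hat\DD)$ may equal $\mu$) need not itself belong to $\DD_\mu$; this is precisely why I invoke the form of the mean value inequality that takes the supremum over the open segment only, on which \eqref{ell} applies without reservation, while merely requiring $G$ to be continuous up to the closed segment.
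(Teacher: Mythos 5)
Your proof is correct and takes essentially the same route as the paper: both bound $\|(\phi-\Id)(x)-(\phi-\Id)(\tilde x)\|$ by the derivative bound \eqref{ell} along the straight segment joining the two points, the paper via the integral representation $\int_0^1[\Id_X-\phi'(tx_*+(1-t)x)](x-x_*)\,dt$ and you via the equivalent supremum form of the mean value inequality. Your extra remark about the endpoint possibly lying outside the open set $\DD_\mu$ is a careful touch that the paper glosses over, but it does not change the substance of the argument.
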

\begin{proof}
  Let $x\in\hat\DD$ and $x_*\in\DD $ be such that $\|x-x_*\| \le\mu$. Then thanks to \eqref{ell}, we have for $k=1,\ldots,p,$
  \begin{eqnarray*}
     && \left\| (x-\phi(x))  -(x_* - \phi(x_*) )   \right\|  \\
     =&&   \left\|  \int_0^1\left[ (x-x_*) - [\phi'(tx_*+(1-t)x)] (x-x_*)\right] dt \right\|           \\
    \le &&  \|x-x_*\| \cdot  \int_0^1\left\|\Id_X  - \phi'(tx_*+(1-t)x)  \right\| dt \le \ell  \|x-x_*\| ,
  \end{eqnarray*}
 so the result follows.
\end{proof}

\begin{theorem}\label{th-sg1}
      Let $\Ff=\left\{F_t\right\}_{t\ge0}\subset C^1(\DD)$ be a $T$-continuous semigroup such that the family of the  Fr\'{e}chet  derivatives $\left\{{F_t}'\right\}_{t\ge0}$ is $T$-continuous. Then the strong limit $f(x)=\lim\limits_{t\to0^+} \frac1t\left(F_t(x)-x\right)$   exists uniformly on subsets strictly inside $\DD$.
\end{theorem}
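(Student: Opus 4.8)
The plan is to reduce the statement to Theorem~\ref{th-sg}. Since $\Ff$ is already assumed $T$-continuous, the only thing I need to extract from the smoothness hypotheses is that $\Ff$ is also $T$-Lipschitzian in the sense of Definition~\ref{def-lip}; once that is in hand, Theorem~\ref{th-sg} immediately yields the uniform existence of the strong limit $f$ on subsets strictly inside $\DD$ (and in fact somewhat more). The bridge from a bound on the derivatives to a bound on the Lipschitz seminorm is exactly Lemma~\ref{lem_2}, so the whole argument amounts to arranging its hypothesis at $t_0=0$.

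First I would record that $F_0=\Id$. Indeed, by joint continuity at the point $(0,x_0)$ together with property (ii) of Definition~\ref{def-sg}, one has $F_0(x_0)=\lim_{t\to0^+}F_t(x_0)=x_0$ for every $x_0$. Differentiating, $F_0'(x)=\Id_X$ for all $x\in\DD$. This identification is what makes the $t_0=0$ instance of the $T$-continuity of the derivative family informative.

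Next, fix an arbitrary subset $\hat\DD$ strictly inside $\DD$ with $\dist(\hat\DD,\partial\DD)>\mu>0$ and form $\DD_\mu:=\{\tilde x:\ \dist(\tilde x,\hat\DD)<\mu\}$. A short verification shows $\DD_\mu$ again lies strictly inside $\DD$: it is bounded, being within distance $\mu$ of the bounded set $\hat\DD$, and for $\tilde x\in\DD_\mu$ one has $\dist(\tilde x,\partial\DD)\ge\dist(\hat\DD,\partial\DD)-\mu>0$. Therefore the hypothesized $T$-continuity of $\{F_t'\}$---which, since the derivatives take values in the Banach space of bounded linear operators on $X$, means convergence in operator norm uniformly on sets strictly inside $\DD$---may be applied with $t_0=0$ on the set $\DD_\mu$, giving
\[
\ell(t):=\sup_{x\in\DD_\mu}\|\Id_X-F_t'(x)\|=\sup_{x\in\DD_\mu}\|F_0'(x)-F_t'(x)\|\longrightarrow0\quad\text{as }t\to0^+.
\]
I would then invoke Lemma~\ref{lem_2} with $\phi=F_t$ and $\ell=\ell(t)$ to conclude that $\Lip_{\hat\DD,\mu}(F_t-\Id)\le\ell(t)\to0$. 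As $\hat\DD$ and $\mu$ were arbitrary, this is precisely the assertion that $\Ff$ is $T$-Lipschitzian, and Theorem~\ref{th-sg} finishes the proof.

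As for the main obstacle: honestly there is no serious analytic difficulty remaining, because the substantive work has already been carried out in Theorem~\ref{th-sg}, and Lemma~\ref{lem_2} is tailor-made to convert the control $\|\Id_X-F_t'(x)\|\to0$ into $T$-Lipschitz continuity. The only points demanding care are the bookkeeping that $\DD_\mu$ is strictly inside $\DD$ (so that the hypothesis on $\{F_t'\}$ can legitimately be used on it rather than only on $\hat\DD$) and the identification $F_0'=\Id_X$, without which the $t_0=0$ case of the derivative's $T$-continuity would say nothing useful.
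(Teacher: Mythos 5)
Your proposal is correct and follows essentially the same route as the paper: reduce to Theorem~\ref{th-sg} by verifying that $\Ff$ is $T$-Lipschitzian, using Lemma~\ref{lem_2} on $\DD_\mu$ with the $t_0=0$ case of the $T$-continuity of $\{F_t'\}$. The only difference is that you spell out the (correct) bookkeeping the paper leaves implicit, namely $F_0=\Id$, hence $F_0'=\Id_X$, and the fact that $\DD_\mu$ lies strictly inside $\DD$.
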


\begin{proof}
  This theorem will follow immediately from Theorem~\ref{th-sg} if we show that  the semigroup $\Ff$ is $T$-Lipschitzian.

  Let $\hat\DD$  be an arbitrary subset lying strictly inside $\DD$.  Take a positive number $\mu< \dist(\hat\DD,\partial\DD)$. Then the set $\DD_\mu:=\left\{ \tilde x:\  \dist(\tilde x, \hat\DD)< \mu\right\}$ lies strictly inside~$\DD$.   Since  the family $\left\{{F_t}'\right\}_{t\ge0}$ is $T$-continuous, for any positive $\varepsilon<\mu$ there is a positive $\delta$ such that
\[
  \sup\limits_{x\in\DD_\mu} \left\|(F_t)'(x)-\Id_X \right\| < \varepsilon
 \]
 whenever $t\in[0,\delta )$. Then $\Lip_{\hat\DD,\mu}(F_t-\Id) \le\varepsilon$ by Lemma~\ref{lem_2}. The proof is complete.
 \end{proof}

In the holomorphic case, $T$- continuity of a semigroup implies {$T$-continuity} of the family of the Fr\'{e}chet derivatives thanks to  the Cauchy inequality.  Therefore we get the following consequence.

\begin{corol}\label{cor3}
   Let $\DD$ be a domain in a complex Banach space. Let $\Ff=\left\{F_t\right\}_{t\ge0}\subset\Hol(\DD)$ be a $T$-continuous  semigroup.  Then $\Ff$ is differentiable with respect to $t$.
\end{corol}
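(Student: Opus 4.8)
The plan is to deduce the corollary from Theorem~\ref{th-sg1} (and, for the full conclusion, from Theorem~\ref{th-sg}) by verifying the one extra hypothesis those results require: that for a holomorphic $T$-continuous semigroup the family of Fr\'echet derivatives $\left\{F_t'\right\}_{t\ge0}$ is again $T$-continuous. Once this is established, the semigroup $\Ff\subset\Hol(\DD)\subset C^1(\DD)$ meets the assumptions of Theorem~\ref{th-sg1}. Moreover, the argument in the proof of that theorem shows via Lemma~\ref{lem_2} that $\Ff$ is then $T$-Lipschitzian, so Theorem~\ref{th-sg} applies and gives the full statement that $u(t,x)=F_t(x)$ solves the Cauchy problem \eqref{cauchy}; that is exactly the differentiability of $t\mapsto F_t(x)$ for every $x\in\DD$.

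The heart of the proof is therefore the passage from $T$-continuity of $\left\{F_t\right\}$ to that of $\left\{F_t'\right\}$, and this is where holomorphy enters through the Cauchy inequality. Fix $t_0\ge0$ and a subset $\hat\DD$ strictly inside $\DD$, choose $\mu>0$ with $\dist(\hat\DD,\partial\DD)>\mu$, and set $\DD_\mu:=\left\{\tilde x:\dist(\tilde x,\hat\DD)<\mu\right\}$, which again lies strictly inside $\DD$. For each $x\in\hat\DD$ the closed ball of radius $r:=\mu/2$ about $x$ is contained in $\DD_\mu$. The difference $F_t-F_{t_0}$ is a holomorphic $X$-valued map on $\DD$, so for every unit vector $v$ the function $\lambda\mapsto(F_t-F_{t_0})(x+\lambda rv)$ is a holomorphic $X$-valued function on the unit disk, bounded there by $M(t):=\sup_{y\in\DD_\mu}\|F_t(y)-F_{t_0}(y)\|$. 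Composing with bounded linear functionals and using the scalar Cauchy estimate together with the Hahn--Banach theorem yields $\|(F_t'-F_{t_0}')(x)\,v\|\le M(t)/r$, and hence
\[
\sup_{x\in\hat\DD}\left\|F_t'(x)-F_{t_0}'(x)\right\|\le\frac{2}{\mu}\,M(t).
\]
Since $\left\{F_t\right\}$ is $T$-continuous, $M(t)\to0$ as $t\to t_0$, so the left-hand side tends to $0$; as $\hat\DD$ and $t_0$ were arbitrary, $\left\{F_t'\right\}_{t\ge0}$ is $T$-continuous.

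Specializing to $t_0=0$, where $F_0=\Id$ and $F_0'=\Id_X$, the same estimate gives $\sup_{x\in\DD_\mu}\|F_t'(x)-\Id_X\|\to0$, so Lemma~\ref{lem_2} yields $\Lip_{\hat\DD,\mu}(F_t-\Id)\to0$ as $t\to0^+$, i.e. $\Ff$ is $T$-Lipschitzian, and Theorem~\ref{th-sg} then completes the proof. I expect the only genuine technical point to be the Cauchy estimate itself: one must check that reducing to a one-dimensional disk is legitimate in an arbitrary complex Banach space, which it is because $\lambda\mapsto(F_t-F_{t_0})(x+\lambda rv)$ is a vector-valued holomorphic function of one variable to which the scalar inequality applies after pairing with functionals, and that the radius $r$ and the neighborhood $\DD_\mu$ are arranged so that every ball $\overline{B}(x,r)$ with $x\in\hat\DD$ stays inside $\DD_\mu$ while $\DD_\mu$ itself stays strictly inside $\DD$. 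Everything beyond this is a direct appeal to the results already proved.
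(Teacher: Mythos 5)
Your proposal is correct and follows exactly the route the paper takes: the paper deduces Corollary~\ref{cor3} from Theorem~\ref{th-sg1} (and hence Theorem~\ref{th-sg} via Lemma~\ref{lem_2}) by observing that, in the holomorphic case, $T$-continuity of $\left\{F_t\right\}_{t\ge0}$ implies $T$-continuity of $\left\{F_t'\right\}_{t\ge0}$ thanks to the Cauchy inequality. Your only addition is to spell out that Cauchy estimate (via the one-dimensional slices $\lambda\mapsto(F_t-F_{t_0})(x+\lambda rv)$ and Hahn--Banach), which the paper leaves implicit, and this is carried out correctly.
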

This coincides with the sufficient (but not the necessary) part of the theorem by Reich and Shoikhet (Theorem~\ref{th_RS}).

It is interesting to ask about the reverse implication:  whether $T$-continuity of the family consisting of the  Fr\'{e}chet  derivatives can imply  $T$-continuity of the family of mappings themselves. It turns out that such conclusion depends on the geometry of the domain $\DD$. We now define the required geometric property.

\begin{defin}\label{def-paths}
We say that a domain $\DD\subset X$ has the  finite path-length property if for every subdomain $\DD_1$ which lies strictly inside $\DD$, there is another domain $\DD_2\supset\DD_1$, which also  lies strictly inside $\DD$ and a number $L$ such that for every pair of points $x_1,x_2\in\DD_2$ there is a smooth curve joining these points that lies in $\DD_2$ and has length less than $L$.
\end{defin}

It can be easily shown that if $\DD$ is a finite union of bounded convex domains or the space $X,\ X\supset\DD,$ is finite-dimensional, then the domain $\DD$  has this property. The following simple example shoes that in general there are domains that have not  the  finite path-length property.

\begin{examp}
Let consider the Banach space $\ell^\infty$. For $a\in\left(0,\frac12\right)$, denote by $D_j^{a}$ the following domains: $$D_1^a:=\left\{ x\in\ell^\infty:\ \left|x_1-\frac12\right|<\frac12+a, \  |x_k|<a \ \mbox{for}\ k>1 \right\}$$ and for $j>1,$
\[
D_j^a:=\left\{ x\in\ell^\infty:\ \begin{array}{lc}
                                   |x_k-1|<a &  \mbox{for}\ k<j, \vspace{2mm}\\
                                   \left|x_j-\frac12\right|<\frac12+a, &  \vspace{2mm} \\
                                  |x_k|<a & \mbox{for}\ k>j
                                 \end{array}\right\}.
\]

Let now $\DD:=\cup_{j=1}^\infty D_j^{1/3} $. This is a domain since $D_j^a\cap D_{j+1}^a\not=\emptyset$. This domain is bounded, namely, it is contained in the ball of radius $\frac43$. Take its subdomain $\DD_1:=\cup_{j=1}^\infty D_j^{1/10}$. It lies strictly inside~$\DD$.

Choose now the points $x^{(j)}\in\DD_1$ as follows: $x^{(j)}_k=1$ when $k\le j$ and $x^{(j)}_k=0$ when $k>j$. The length of any curve in $\DD$ joining the point  $x^{(j)}$ with the origin is greater than $j/2$, that is,  tends to infinity as $j\to\infty$. Thus, $\DD\subset X$ does not have the   finite path-length property.
\end{examp}

Our interest in the  finite path-length property is based on the next assertion.

\begin{propo}\label{propo_T-cont}
  Let a domain $\DD\subset X$ have the  finite path-length property and let $\left\{f_t\right\}_{t\ge0}\subset C^1(\DD,Y)$ be a jointly continuous family. If the family  $\left\{{f_t}'\right\}_{t\ge0}\subset C(\DD,L(Y))$ consisting of the Fr\'{e}chet derivatives is $T$-continuous, then the family  $\left\{f_t\right\}_{t\ge0}$ is $T$-continuous too.
\end{propo}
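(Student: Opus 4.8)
The plan is to fix an arbitrary $t_0\ge0$ and an arbitrary subdomain $\DD_1$ lying strictly inside $\DD$, and to bound $\sup_{x\in\DD_1}\|f_t(x)-f_{t_0}(x)\|$ as $t\to t_0$. Invoking the finite path-length property for $\DD_1$ produces a larger domain $\DD_2\supset\DD_1$, also lying strictly inside $\DD$, together with a length bound $L$. After fixing one reference point $x_0\in\DD_1$, I would split, for each $x\in\DD_1$,
\[
f_t(x)-f_{t_0}(x)=\bigl[f_t(x_0)-f_{t_0}(x_0)\bigr]+\Bigl\{\bigl[f_t(x)-f_t(x_0)\bigr]-\bigl[f_{t_0}(x)-f_{t_0}(x_0)\bigr]\Bigr\},
\]
and control the two pieces separately.

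Next I would handle the second piece by integrating the derivative difference along a short curve. By the finite path-length property, join $x_0$ to $x$ by a smooth curve $\gamma\colon[0,1]\to\DD_2$ of length less than $L$. Applying the fundamental theorem of calculus along $\gamma$ to the $C^1$ maps $f_t$ and $f_{t_0}$ and subtracting, the second piece becomes $\int_0^1\bigl[{f_t}'(\gamma(s))-{f_{t_0}}'(\gamma(s))\bigr]\gamma'(s)\,ds$, whose norm is at most $L\cdot\sup_{y\in\DD_2}\|{f_t}'(y)-{f_{t_0}}'(y)\|$. Since $\DD_2$ lies strictly inside $\DD$ and $\{{f_t}'\}$ is $T$-continuous, this supremum tends to $0$ as $t\to t_0$; decisively, the resulting bound does not depend on $x$, because $L$ is a single constant valid for all $x\in\DD_1$. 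The first piece is handled directly: joint continuity of $\{f_t\}$ at the fixed point $(t_0,x_0)$ gives $\|f_t(x_0)-f_{t_0}(x_0)\|\to0$, again uniformly in $x$ since it does not involve $x$ at all.

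Combining the two estimates yields
\[
\sup_{x\in\DD_1}\|f_t(x)-f_{t_0}(x)\|\le\|f_t(x_0)-f_{t_0}(x_0)\|+L\cdot\sup_{y\in\DD_2}\|{f_t}'(y)-{f_{t_0}}'(y)\|\longrightarrow0
\]
as $t\to t_0$. Since $t_0$ and $\DD_1$ were arbitrary, this is exactly the $T$-continuity of $\{f_t\}$. The only genuinely delicate point is the uniformity in $x$, and this is precisely where the finite path-length property is indispensable: without a common length bound for all pairs of points in $\DD_2$, the path integral could grow without bound as $x$ ranges over $\DD_1$, which is exactly the pathology exhibited by the $\ell^\infty$ example above, where the base point can be joined to points of the subdomain only by arbitrarily long curves. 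Everything else is the standard mean-value estimate obtained by integrating a Fr\'echet derivative along a curve, which is valid for $C^1$ maps between Banach spaces.
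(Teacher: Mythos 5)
Your proposal is correct and follows essentially the same route as the paper's own proof: fix a reference point, use joint continuity there, and control the remaining difference by integrating ${f_t}'-{f_{t_0}}'$ along a curve of length at most $L$ inside $\DD_2$, exactly as in the paper. The decomposition, the role of the finite path-length property, and the final $(L+1)\varepsilon$-type bound all coincide with the published argument.
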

\begin{proof}
 Let domain $\DD_1$ lie strictly inside $\DD$. Fix a point $x_1\in\DD_1$. For any $t_0$ and any $\varepsilon>0$ there is $\delta_1>0$ such that $\|f_t(x_1)-f_{t_0}(x_1) \| _Y<\varepsilon$ as $|t-t_0|<\delta_1$.

 By Definition~\ref{def-paths}, there is a domain $\DD_2\supset\DD_1$, which also  lies strictly inside $\DD$ and a number $L$ such that one can join $x_1$ with every point $x_2\in\DD_2$ by a smooth curve  $x(q):[0,1]\to\DD_2$  of length less than $L$.  Then we have:
  \begin{eqnarray*}
       && \|f_t(x_1)-f_{t_0}(x_1)-f_t(x_2) +f_{t_0}(x_2)\| _Y \\
       &\le& \left\|\int_0^1 \left( (f_t)'(x(q))-(f_{t_0})'(x(q)) \right) x'(q)dq \right\| _Y      \\
       &\le&\int_0^1 \left\| (f_t)'(x(q))-(f_{t_0})'(x(q)) \right\| _{L(Y)} \left\|x'(q)\right\| _Y dq  \\
       &\le& L\cdot \sup_{x\in\DD_2} \left\| (f_t)'(x)-(f_{t_0})'(x) \right\| _{L(Y)}.
 \end{eqnarray*}

According to Definition~\ref{def-cont},  there is $\delta_2>0$ such that for all $x\in\DD_2$ we have ${\left\|(f_t)'(x)-(f_{t_0})'(x) \right\| _{L(Y)}<\varepsilon}$ whenever $|t-t_0|<\delta_2$.  Therefore, if $|t-t_0|<\min\{\delta_1,\delta_2\}$, then
 \begin{eqnarray*}
     &&  \|f_t(x_2)-f_{t_0}(x_2) \| _Y  \\
     &\le& \|f_t(x_1)-f_{t_0}(x_1)-f_t(x_2) +f_{t_0}(x_2)\| _Y + \|f_t(x_1)-f_{t_0}(x_1) \| _Y \\
    &\le& (L+1)\varepsilon.
 \end{eqnarray*}
 This estimate means that $f_t(x)\to f_{t_0}(x)$ as $t\to t_0$, uniformly on  $\DD_1$.
\end{proof}

This proposition together with the remark that if $X$ is finite-dimensional, the joint continuity of the family $\left\{{F_t}'\right\}_{t\ge0}$ implies its $T$-continuity by compactness, allow us to the following consequence of Theorem~\ref{th-sg1}.

\begin{corol}\label{cor2}
       Let $\Ff=\left\{F_t\right\}_{t\ge0}\subset C^1(\DD)$ be a semigroup on a domain $\DD\subset X$.  If $\DD$  has  the finite path-length property and the family of the  Fr\'{e}chet  derivatives $\left\{{F_t}'\right\}_{t\ge0}$ is $T$-continuous, then $\Ff$  is differentiable with respect to $t$. In particular, this holds when  $X$ is finite-dimensional Banach space and the family of the  Fr\'{e}chet  derivatives $\left\{{F_t}'\right\}_{t\ge0}$ is JC.
\end{corol}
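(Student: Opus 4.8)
The plan is to obtain this corollary purely by chaining together Proposition~\ref{propo_T-cont} and Theorem~\ref{th-sg1}, without any fresh estimates. The point to keep in mind is that Theorem~\ref{th-sg1} presupposes the semigroup itself to be $T$-continuous, whereas the hypotheses here supply only $T$-continuity of the \emph{derivatives} together with the geometric finite path-length condition. Bridging this gap is precisely the role of Proposition~\ref{propo_T-cont}, so the whole argument amounts to verifying that its hypotheses, and then those of Theorem~\ref{th-sg1}, are in force.

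First I would record that $\Ff$, being a semigroup in the sense of Definition~\ref{def-sg}, is by definition a jointly continuous family, and that $\Ff\subset C^1(\DD)\subset C^1(\DD,X)$. Since $\DD$ has the finite path-length property and the family of Fr\'echet derivatives $\{F_t'\}$ is $T$-continuous by assumption, Proposition~\ref{propo_T-cont} applies with target space $Y=X$ and yields that $\{F_t\}$ itself is $T$-continuous. At this stage both $\Ff$ and $\{F_t'\}$ are $T$-continuous, so the hypotheses of Theorem~\ref{th-sg1} are satisfied and the strong limit $f(x)=\lim_{t\to0^+}\tfrac1t(F_t(x)-x)$ exists uniformly on subsets strictly inside $\DD$. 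This is right-differentiability of $t\mapsto F_t(x)$ at $t=0$; exactly as in the final paragraph of the proof of Theorem~\ref{th-sg}, the semigroup identity then promotes this to differentiability at every $t>0$ via $\lim_{s\to0^+}\tfrac1s(F_{t+s}(x)-F_t(x))=f(F_t(x))$, so $\Ff$ is differentiable with respect to $t$.

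For the finite-dimensional assertion I would only need to check that the two structural hypotheses come for free. When $X$ is finite-dimensional the domain $\DD$ automatically has the finite path-length property (as noted just after Definition~\ref{def-paths}), and joint continuity of $\{F_t'\}$ already forces its $T$-continuity, since in finite dimensions local uniform continuity coincides with uniform continuity on compact subsets. Thus the JC hypothesis on the derivatives reduces to the $T$-continuity hypothesis of the main part, and the conclusion follows.

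I do not expect a genuine obstacle, as this is essentially a bookkeeping corollary. The only point deserving a moment's care is confirming that Proposition~\ref{propo_T-cont} may legitimately be invoked with $Y=X$ and that its ``jointly continuous'' hypothesis is supplied automatically by the definition of a semigroup; once this is observed, everything reduces to a direct application of results already established.
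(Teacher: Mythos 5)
Your proposal is correct and follows exactly the paper's intended route: the paper derives this corollary by the same chain, applying Proposition~\ref{propo_T-cont} (with the finite path-length property) to upgrade $T$-continuity of $\left\{{F_t}'\right\}_{t\ge0}$ to $T$-continuity of $\Ff$ itself, then invoking Theorem~\ref{th-sg1}, with the finite-dimensional case handled by the same compactness remark. Your extra care about applying the proposition with $Y=X$ and about promoting right-differentiability at $t=0$ to differentiability at all $t$ via the semigroup property matches the paper's treatment (the latter being part of the proof of Theorem~\ref{th-sg}).
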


Additionally, in the connection with Theorem~\ref{th-sg1} we mention  the classical work by Bochner and Montgomery \cite{Bo-Mon} from which follows that {\it a group of transformations of a finite-dimensional manifold $M$, which is of class $C^k$  for each $t$, is generated by a $C^{k-1}$ vector field} (see also \cite{Ch-M}). Hence the following conjecture seems to be natural.

\begin{conj}
  Let $\Ff=\left\{F_t\right\}_{t\ge0}\subset C^k(\DD)$ be a $T$-continuous semigroup such that for  each $j=1,\ldots,k$ the  family $\left\{{F_t}^{(j)}\right\}_{t\ge0}$ is $T$-continuous. Then the strong limit
 \[
f(x)=\lim_{t\to0^+} \frac1t\left(F_t(x)-x\right)
\]
    exists uniformly on subsets strictly inside $\DD$. Moreover, $f $ is bounded on every set that lies strictly inside $\DD$ and belongs to the class $C^{k-1}(\DD, X)$.
\end{conj}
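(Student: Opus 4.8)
The plan is to argue by induction on $k$, using a tangent lift that trades one order of spatial smoothness for one order of the time-differentiability problem, so that the inductive step reduces the $C^k$ case to the $C^{k-1}$ case. For every $k\ge1$ the existence of the strong limit $f$ (uniformly on subsets strictly inside $\DD$) and its boundedness on such subsets are already contained in Theorems~\ref{th-sg1} and~\ref{th-sg}: the case $j=1$ of the hypothesis supplies the $T$-continuity of $\{{F_t}'\}$ required there. Hence the only genuinely new assertion is the regularity $f\in C^{k-1}(\DD,X)$, which I will obtain by induction. The base case $k=1$ is exactly Theorem~\ref{th-sg1} together with the continuity statement of Theorem~\ref{th-sg}, giving $f\in C(\DD,X)=C^{0}(\DD,X)$.

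For the inductive step, assume the full statement at level $k-1$ and let $\Ff=\{F_t\}\subset C^k(\DD)$ satisfy the hypotheses at level $k$. I would define the lift $G_t\colon\DD\times X\to\DD\times X$ by $G_t(x,v):=\bigl(F_t(x),\,{F_t}'(x)v\bigr)$. The chain rule $(F_{t+s})'(x)={F_t}'(F_s(x)){F_s}'(x)$ shows that $\{G_t\}$ obeys the semigroup law $G_{t+s}=G_t\circ G_s$; property~(ii) for $\{G_t\}$ follows from $F_t(x)\to x$ and from ${F_t}'(x)\to\Id_X$ (the latter being the $T$-continuity of $\{{F_t}'\}$ at $t=0$, since $F_0=\Id$). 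As a map of $(x,v)$ that is linear in $v$ and $C^{k-1}$ in $x$, $G_t$ lies in $C^{k-1}(\DD\times X,\,X\times X)$, and a product/chain-rule expansion shows that each derivative $G_t^{(j)}$ with $1\le j\le k-1$ is a multilinear expression in the maps ${F_t}^{(i)}$ with $i\le j+1\le k$.

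A subset lies strictly inside $\DD\times X$ precisely when it is bounded and its projection to the first factor lies strictly inside $\DD$; in particular the fibre variable $v$ ranges over a bounded set. Consequently the $T$-continuity of each family $\{{F_t}^{(i)}\}_{\,i\le k}$ passes, through the multilinear expressions above and the bound on $v$, to the $T$-continuity of $\{G_t\}$ and of $\{G_t^{(j)}\}_{\,1\le j\le k-1}$; joint continuity of $\{G_t\}$ follows from continuity in $x$ together with the local uniform (in $x$) convergence in $t$. Thus $\{G_t\}$ is a $C^{k-1}$ semigroup meeting the hypotheses at level $k-1$, and the induction hypothesis yields a generator $g\in C^{k-2}(\DD\times X,\,X\times X)$ that is bounded on subsets strictly inside. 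Computing the limit coordinatewise gives $g(x,v)=\bigl(f(x),\,h(x)v\bigr)$, where $h(x):=\lim_{t\to0^+}\frac1t\bigl({F_t}'(x)-\Id_X\bigr)$ exists; reading off the second coordinate shows $h\in C^{k-2}(\DD,L(X))$. Finally, since $f_t:=\frac1t(F_t-\Id)\to f$ and $f_t'=\frac1t({F_t}'-\Id_X)\to h$ locally uniformly on $\DD$ (every point has a ball strictly inside $\DD$), the standard theorem on term-by-term differentiation of a uniformly convergent net gives $f'=h\in C^{k-2}(\DD,L(X))$, whence $f\in C^{k-1}(\DD,X)$, completing the induction.

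The main obstacle is the bookkeeping of $T$-continuity under the lift in the infinite-dimensional setting: the higher derivatives $G_t^{(j)}$ are multilinear maps, and one must verify that the chain- and product-rule expansions keep every term controlled in the appropriate operator norm uniformly on subsets strictly inside $\DD\times X$ — this is exactly where the boundedness of the fibre variable $v$ is used, and where one must check that no derivative of order exceeding $k$ is ever required. A secondary, more conceptual point is the identification $f'=h$: the induction delivers $h$ only as a limit of difference quotients of ${F_t}'$, and upgrading this to the genuine Fr\'echet derivative of $f$ relies on the local uniform convergence of both $f_t$ and $f_t'$, which is available because subsets strictly inside $\DD$ may be taken to contain a ball around each point.
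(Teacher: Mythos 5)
First, a point of comparison you could not have known: the statement you were asked to prove is presented in the paper as a \emph{conjecture} --- the author gives no proof of it. So your attempt cannot be measured against a paper argument; it must stand on its own, and the bar is correspondingly high.

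Your overall scheme --- the tangent lift $G_t(x,v)=\bigl(F_t(x),\,F_t'(x)v\bigr)$ on $\DD\times X$, induction on $k$, and term-by-term differentiation --- is sensible, and much of it checks out: the existence and boundedness of $f$ do follow from Theorems~\ref{th-sg} and~\ref{th-sg1}; the semigroup law and joint continuity of $\{G_t\}$ hold; sets strictly inside $\DD\times X$ are exactly the bounded sets whose projection lies strictly inside $\DD$, so $T$-continuity of $\{G_t^{(j)}\}$, $j\le k-1$, does transfer; restricting $v$ to the unit ball (since $\hat\DD\times \overline{B}(0,1)$ lies strictly inside $\DD\times X$) correctly upgrades the second coordinate of the generator to an operator-norm limit $h(x)=\lim_{t\to0^+}\frac1t\bigl(F_t'(x)-\Id_X\bigr)$, uniform on sets strictly inside $\DD$; and the identification $f'=h$ by term-by-term differentiation is legitimate.

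The genuine gap is the sentence ``reading off the second coordinate shows $h\in C^{k-2}(\DD,L(X))$.'' The induction hypothesis gives you only that $g_2(x,v):=h(x)v$ is of class $C^{k-2}$ \emph{jointly in} $(x,v)$, and in infinite dimensions this does not imply that $x\mapsto h(x)$ is $C^{k-2}$ into $L(X)$ with the operator norm. Since $g_2$ is linear in $v$, one has $h(x)=Dg_2(x,0)\circ\iota_2$ with $\iota_2(\eta)=(0,\eta)$, so joint $C^{m}$ regularity of $g_2$ yields only $C^{m-1}$ regularity of $h$, and this loss is sharp: on $X=L^2(\R)$, with $T_\sigma$ the translation group and $h(t):=\int_0^t T_\sigma\,d\sigma$, the map $(t,v)\mapsto h(t)v$ is jointly $C^1$, yet $h$ is not norm-differentiable (its strong derivative $T_t$ is not a norm limit of the difference quotients). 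This is precisely the strong-versus-uniform topology issue the whole paper turns on. Consequently, as written, your induction only yields $f\in C^{k-2}(\DD,X)$, one order short of the conjectured $C^{k-1}$. The repair is to strengthen the inductive statement: carry through the induction not merely ``$g\in C^{k-2}$'' but the existence, in the appropriate multilinear operator norms and uniformly on sets strictly inside $\DD$, of all limits $h_j=\lim_{t\to0^+}\frac1t\bigl(F_t^{(j)}-\Id^{(j)}\bigr)$ for $j\le k-1$ (read off from the coordinates of the generators of the iterated lifts), together with the identifications $f^{(j)}=h_j$ obtained by repeated term-by-term differentiation; each $h_j$ is then a locally uniform norm-limit of continuous maps, hence continuous, giving $f\in C^{k-1}$. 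With that strengthening your approach appears to close the conjecture, but the proof as submitted does not.
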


\medskip

{\bf Acknowledgments.} The author is grateful to Guy Katriel for very fruitful discussions.  

\medskip



\begin{thebibliography}{950}


\bibitem{A92} M. Abate,
The infinitesimal generators of semigroups of holomorphic maps, {\it Ann. Mat. Pura Appl.} {\bf 161} (1992), 167--180.

\bibitem{B}  V. Barbu,
{\sl Nonlinear Semigroups and Differential Equations in Banach Spaces}, Noordhoff, Leyden, 1976.

\bibitem{B-P} E. Berkson and H. Porta,
Semigroups of analytic functions and composition operators, {\it Michigan Math. J.} {\bf 25} (1978), 101--115.


\bibitem{Bo-Mon} S. Bochner and D. Montgomery,
Groups of differentiable and real or complex analytic transformations, {\it Ann. Math.} {\bf 46} (1945), 685--694.

\bibitem{Ch-M}  P. Chernoff, and J. Marsden,
On continuity and smoothness of group actions, {\it Bull. Amer. Math. Soc.} {\bf 76} (1970), 1044--1049.





%
\bibitem{D-Sch1958} N. Dunford and J. T. Schwartz,
{ \sl Linear Operators}, Vol. I, Interscience, New York, 1958.





\bibitem{EJK-multi}  M. Elin, F. Jacobzon, G. Katriel,   Continuous and holomorphic semicocycles in Banach spaces, {\it J. Evol. Equ.} {\bf 19} (2019), 1199--1221. 




\bibitem{E-R-S-19} M. Elin, S. Reich and D. Shoikhet,
{\sl Numerical range of holomorphic mappings and applications}, Birkh\"{a}user, Cham, 2019.


\bibitem{G-K-T} K. Goebel, W. A. Kirk, and R. L. Thele,
Uniformly lipschitzian families of transformations in Banach spaces, {\it Can. J. Math.} {\bf 26} (1974), 1245--1256.


\bibitem{IJM-SLL-84} J. M. Isidro and L. L. Stacho,
{\sl Holomorphic Automorphism Groups in Banach Spaces: An
Elementary Introduction}, North Holland, Amsterdam, 1984.

\bibitem{Har} L. A. Harris,
The numerical range of holomorphic functions in Banach spaces,   \textit{Amer. J. Math.} {\bf 93}  (1971), 1005--1019.




\bibitem{Kom} Y. K\={o}mura,
Differentiability of nonlinear semigroups, {\it  J. Math. Sot. Japan} {\bf 21} (1969), 375--402.




\bibitem{P-X}  J. G. Peng and  Z. B. Xu,
A novel dual approach to nonlinear semigroups of Lipschitz operators, {\it Trans. Amer. Math. Soc.} {\bf 357} (2005) 409--424.


\bibitem{R-80}  S. Reich,
Product formulas, nonlinear semigroups, and accretive operators, {\it J. Functional Anal.} {\bf 36} (1980), 147--168.


\bibitem{R-81}   S. Reich,
A nonlinear Hille-Yosida theorem in Banach spaces, {\it J. Math. Anal. Appl.} {\bf 84} (1981), 1--5.


\bibitem{R-84}  S. Reich,
On the differentiability of nonlinear semigroups. {\it Infinite-dimensional systems (Retzhof, 1983)}, 203--208,
Lecture Notes in Math., 1076, Springer, Berlin, 1984.



\bibitem{R-S96} S. Reich and D. Shoikhet,
Generation theory for semigroups of holomorphic mappings in Banach spaces, \textit{Abstr. Appl. Anal.} {\bf 1}  (1996), 1--44.


\bibitem{R-S97} S. Reich and D. Shoikhet,
Semigroups and generators on convex domains with the hyperbolic metric, {\it Atti Accad. Naz. Lincei} {\bf 8} (1997), 231--258.


\bibitem{R-S98} S. Reich and D. Shoikhet,
Metric domains, holomorphic mappings and nonlinear semigroups,  {\it Abstract Appl. Anal.} {\bf 3} (1998), 203--228.


\bibitem{R-S1} S. Reich and D. Shoikhet,
{\sl Nonlinear Semigroups, Fixed Points, and the Geometry of
Domains in Banach Spaces}, World Scientific Publisher, Imperial
College Press, London, 2005.

\bibitem{W} L. Waelbroeck,
 Differentiability of H\"{o}lder-continuous semigroups, {\it Proc. Amer. Math. Soc.} {\bf 21} (1969),  451--454.


\end{thebibliography}
\end{document}